\def\d{\delta}
\def\a{\alpha}
\def\Z{\mathbb{Z}}
\def\N{\mathbb{N}}
\def\F{\mathbb{F}}
\def\id{\mathbf{id}}
\numberwithin{equation}{section}
\newtheorem{theo}{Theorem}[section]
\newtheorem{defi}[theo]{Definition}
\newtheorem{coro}[theo]{Corollary}
\newtheorem{lemm}[theo]{Lemma}
\newtheorem{prop}[theo]{Proposition}
\newtheorem{rema}[theo]{Remark}
\begin{document}

\title[Biderivations of Lie algebras]{Biderivations of Lie algebras}

\author{Qiufan Chen}

\address{Chen: Department of Mathematics, Shanghai Maritime University,
 Shanghai, 201306, China.}\email{chenqf@shmtu.edu.cn}

\author{Yufeng Yao}

\address{Yao: Department of Mathematics, Shanghai Maritime University,
 Shanghai, 201306, China.}\email{yfyao@shmtu.edu.cn}

\author{Kaiming Zhao}

\address{Zhao: Department of Mathematics, Wilfrid
		Laurier University, Waterloo, ON, Canada N2L 3C5,  and School of
		Mathematical Science, Hebei Normal (Teachers) University, Shijiazhuang, Hebei, 050024 P. R. China.}
	\email{kzhao@wlu.ca}

\subjclass[2010]{17B20, 17B40, 17B65, 17B66}

\keywords{Symmetric biderivation, Witt algebra, finite-dimensional simple Lie algebra, symmetric radical, characteristic subalgebra}

\thanks{This work is supported by National Natural Science Foundation of China (Grant Nos. 12271345 and 12071136) and NSERC (311907-2020). }

\begin{abstract} In this paper, we first introduce the concept of symmetric biderivation radicals and characteristic subalgebras of Lie algebras, and study their properties. Based on these results, we precisely determine biderivations of some Lie algebras including finite-dimensional   simple Lie algebras over arbitrary fields   of characteristic not $2$ or $3$, and the Witt algebras $\mathcal{W}^+_n$ over  fields of characteristic $0$. As an application, commutative post-Lie algebra structure on aforementioned Lie algebras is shown to be trivial.
\end{abstract}

\maketitle
\setcounter{tocdepth}{1}\tableofcontents
\begin{center}
\end{center}

\section{Introduction}


Derivations and generalized derivations are important in the study of structure of various algebras \cite{B1,B2,B3,B4,D,LL,ZF}. Especially, Bre\u{s}ar et al. introduced the notion of biderivation of rings in \cite{B4}, and they showed that all biderivations of noncommutative prime rings are inner. In \cite{W2}, the biderivations of Lie algebras was introduced and the authors proved that all skew-symmetric biderivations of finite dimensional simple Lie algebras over an algebraically closed field of characteristic zero are inner. Furthermore, in \cite{T1}, it was proved that biderivations (without the skew-symmetry restriction) of finite dimensional complex simple Lie algebras are inner. In recent years, many scholars gave their attentions to the study of biderivations of many other Lie (super) algebras using case by case discussion, see \cite{TY,DSL,LGZ,T2,W1,X}. There is no uniform method to determine all biderivations of some classes of Lie (super) algebras.

As is well-known, any biderivation can be decomposed into a sum of a skew-symmetric biderivation and a symmetric biderivation. Skew-symmetric biderivations which are connected with linear commuting map have been deeply studied, such as, all skew-symmetric biderivations on any perfect and centerless Lie algebras are inner biderivations \cite{B5}. Symmetric biderivations can determine commutative post-Lie algebra structures, which are related to the homology of partition posets and Koszul operads \cite{V}. However, up to now, there is no efficient tool to  determine all  symmetric biderivations on   Lie algebras. In the present paper, by introducing the concepts of symmetric biderivation radical and characteristic subalgebra, we establish a simple approach to determine all symmetric biderivations of  finite-dimensional classical simple Lie algebras over a field of characteristic different from $2, 3$, and  Witt algebras $\mathcal{W}^+_n$ over a field of characteristic $0$. It is worthwhile to point out that our method is conceptual avoiding a lot of computations and being distinct from other existing papers. We believe that our method may be used to deal with many other Lie algebras.

The paper is organized as follows. In Section 2, we give some fundamental definitions (symmetric biderivation radical, and characteristic subalgebra  of a Lie algebra $L$) and establish some related properties. In Section 3, we prove that every  symmetric biderivation of a finite-dimensional classical simple Lie algebra over arbitrary fields  of characteristic not $2$ or $3$ is trivial. In Section 4, we show that every  symmetric  biderivation of the Witt algebras $\mathcal{W}^+_n$ over   fields of characteristic $0$ is trivial. Finally, we  determine the
commutative post-Lie algebra structure on aforementioned Lie algebras.
\section{Symmetric biderivation radicals and characteristic subalgebras}
Throughout the paper, we denote by $\Z,\,\Z_+,\,\N$ the sets of integers, nonnegative integers, positive integers, respectively. Let $L$ be a Lie algebra over an arbitrary  field $\F$ in the following, unless otherwise stated.  Denote by ${\rm Aut\,}(L)$ the automorphism  group of $L$.

\begin{defi}\label{defi2.1}
\rm
A bilinear map $\delta:L\times L\to L$ is called a {\em  biderivation} if for any $x,y,z\in L$,
$$\delta([x,y],z)=[x,\delta(y,z)]-[y,\delta(x,z)],\quad \delta(x, [y,z])=[\delta(x,y),z]+[y, \delta(x, z)].$$
If furthermore $\delta(x,y) = \delta(y,x)$ for all $x,y\in L$, we call $\delta$ a {\em symmetric biderivation}.
\end{defi}

\begin{rema}
\begin{itemize}
\item[(1)]
A bilinear map $\delta:L\times L\to L$ is a biderivation if and only if both $\delta(x,\cdot)$ and $\delta(\cdot,x)$ are derivations for every $x\in L$.
\item[(2)]
The set of all biderivations on $L$ is a vector space.
\end{itemize}
\end{rema}
\begin{lemm}\label{prop1}
If $\delta: L\times L\to L$ is a symmetric biderivation, then
$$\delta(x,[y,z]) +\delta(y,[z,x])+\delta(z,[x,y])=0,\ \ \ \ \forall x,y,z\in L.$$
\end{lemm}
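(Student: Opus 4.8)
The plan is to expand each of the three summands using the second defining identity of a biderivation, namely $\delta(x,[y,z])=[\delta(x,y),z]+[y,\delta(x,z)]$, and then to exploit the symmetry $\delta(u,v)=\delta(v,u)$ so that all the resulting brackets cancel in pairs. Concretely, applying this identity once to each term gives
\begin{align*}
\delta(x,[y,z]) &= [\delta(x,y),z]+[y,\delta(x,z)],\\
\delta(y,[z,x]) &= [\delta(y,z),x]+[z,\delta(y,x)],\\
\delta(z,[x,y]) &= [\delta(z,x),y]+[x,\delta(z,y)],
\end{align*}
where the last two lines come from the same identity after the cyclic substitutions $(x,y,z)\mapsto(y,z,x)$ and $(x,y,z)\mapsto(z,x,y)$.

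Next I would add the three right-hand sides and invoke symmetry. Writing $A=\delta(x,y)=\delta(y,x)$, $B=\delta(y,z)=\delta(z,y)$, and $C=\delta(z,x)=\delta(x,z)$, the six terms become $[A,z]+[y,C]+[B,x]+[z,A]+[C,y]+[x,B]$. Grouping them as $([A,z]+[z,A])+([y,C]+[C,y])+([B,x]+[x,B])$ and using the antisymmetry of the Lie bracket, each pair vanishes, so the total sum is $0$. This is exactly the asserted identity.

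There is essentially no hard step here: the whole argument is a single application of the bracket-derivation identity in the second slot, followed by one cancellation. The only point requiring a little care is the bookkeeping — making sure the cyclic substitutions are carried out consistently so that, after using $\delta(u,v)=\delta(v,u)$, each of $A,B,C$ shows up in a Lie bracket in both orders. I would also remark that one could instead expand via the first identity $\delta([x,y],z)=[x,\delta(y,z)]-[y,\delta(x,z)]$, but the route through the second identity is the most direct and avoids introducing extra sign juggling.
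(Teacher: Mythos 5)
Your proof is correct and is essentially the same as the paper's: both expand each of the three terms via the derivation-in-the-second-argument identity and then cancel the six resulting brackets in pairs using the symmetry of $\delta$ (the paper just writes each pair in the form $[y,\delta(x,z)]-[y,\delta(z,x)]$ rather than $[A,z]+[z,A]$). Your $A,B,C$ bookkeeping is a slightly tidier presentation of the identical computation.
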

\begin{proof} A direct computation gives
\begin{align*}
&\delta(x,[y,z]) +\delta(y,[z,x])+\delta(z,[x,y])\\
=&[y,\d(x,z)]-[z,\d(x,y)]+[z,\d(y,x)]-[x,\d(y,z)]+[x,\d(z,y)]-[y,\d(z,x)]\\
=&0.
\end{align*}
\end{proof}
\begin{lemm}\label{prop2}
Let $L$ be a Lie algebra admitting a Cartan decomposition
$L=\oplus_{\alpha\in \frak{h}^*}L_{\alpha}$ with respect to a fixed Cartan subalgebra $\frak{h}$. Let $x\in L_{\alpha}, y\in L_{\beta}$ with $\alpha\neq\beta\in \frak{h}^*$ and $\delta: L\times L\to L$ be a symmetric biderivation. If $[x,y]=0$, then $\delta(x,y)=0.$
\end{lemm}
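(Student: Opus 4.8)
The plan is to deduce everything from the cyclic identity established in Lemma \ref{prop1}, together with the fact that the weight spaces $L_\alpha$ are eigenspaces for the adjoint action of $\frak{h}$. First I would exploit that $\alpha\neq\beta$ in $\frak{h}^*$ to fix an element $h\in\frak{h}$ with $\alpha(h)\neq\beta(h)$; such an $h$ exists precisely because $\alpha$ and $\beta$ are distinct linear functionals on $\frak{h}$. Since $x\in L_\alpha$ and $y\in L_\beta$, the defining property of the Cartan decomposition gives $[h,x]=\alpha(h)x$ and $[h,y]=\beta(h)y$.

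Next I would apply Lemma \ref{prop1} to the ordered triple $(h,x,y)$, which yields
$$\delta(h,[x,y])+\delta(x,[y,h])+\delta(y,[h,x])=0.$$
The first term vanishes, since $[x,y]=0$ by hypothesis and $\delta$ is bilinear, so $\delta(h,0)=0$. For the remaining two terms I would substitute $[y,h]=-\beta(h)y$ and $[h,x]=\alpha(h)x$, pull the scalars out by bilinearity, and then use the symmetry $\delta(y,x)=\delta(x,y)$ to rewrite the third term in terms of $\delta(x,y)$. This collapses the identity to
$$\bigl(\alpha(h)-\beta(h)\bigr)\,\delta(x,y)=0.$$

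Finally, since $\alpha(h)-\beta(h)\neq 0$ by the choice of $h$, I would divide by this nonzero scalar to conclude $\delta(x,y)=0$. I do not expect a genuine obstacle here: the whole content of the argument is the observation that the cyclic relation of Lemma \ref{prop1}, when fed a Cartan element in the first slot and two weight vectors whose bracket vanishes, automatically isolates $\delta(x,y)$ with the nonzero coefficient $\alpha(h)-\beta(h)$. The only points that require care are the correct use of the symmetry of $\delta$ (which is what makes the two surviving scalar terms combine into $\alpha(h)-\beta(h)$ rather than cancel), and the standing assumption that $L_\alpha$ and $L_\beta$ are honest $\frak{h}$-eigenspaces, so that $[h,\cdot]$ acts on $x$ and $y$ by the stated scalars.
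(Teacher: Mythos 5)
Your proposal is correct and follows exactly the paper's own argument: the same choice of $h\in\frak{h}$ with $\alpha(h)\neq\beta(h)$, the same application of the cyclic identity from Lemma \ref{prop1} to the triple $(h,x,y)$, and the same collapse to $\bigl(\alpha(h)-\beta(h)\bigr)\delta(x,y)=0$. You merely spell out the intermediate substitutions and the use of symmetry that the paper leaves implicit.
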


\begin{proof}
Since $\alpha\neq \beta$, there exists $h\in \frak{h}$ such that $\alpha(h)\neq \beta(h)$.
By Lemma \ref{prop1}, we have
$$\delta(x,[y,h])+\delta(y,[h,x])+\delta(h,[x,y])=0,$$
which forces $\left(\alpha(h)-\beta(h)\right)\delta(x,y)=0$, that is $\delta(x,y)=0$, as desired.
\end{proof}

\begin{lemm}\label{prop3}
Let $\delta: L\times L\to L$ be a symmetric biderivation. Then for any $\sigma\in {\rm Aut\,}(L)$, $\delta_{\sigma}: L\times L\to L$ is a symmetric biderivation,
where
$$\delta_{\sigma}(x,y)=\sigma(\delta(\sigma^{-1}(x),\sigma^{-1}(y))), \ \ \ \ \forall x,y\in L.$$
\end{lemm}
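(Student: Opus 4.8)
The plan is a direct verification that $\delta_{\sigma}$ inherits all three defining features of a symmetric biderivation from $\delta$, with the automorphism property of $\sigma$ used to move brackets through $\sigma$ and $\sigma^{-1}$. Since $\sigma$ and $\sigma^{-1}$ are linear and $\delta$ is bilinear, the composite $\delta_{\sigma}(x,y)=\sigma(\delta(\sigma^{-1}(x),\sigma^{-1}(y)))$ is bilinear, so nothing is needed there. Symmetry is equally immediate: swapping $x$ and $y$ only permutes the two inner arguments of $\delta$, and since $\delta(\sigma^{-1}(y),\sigma^{-1}(x))=\delta(\sigma^{-1}(x),\sigma^{-1}(y))$ by the symmetry of $\delta$, we obtain $\delta_{\sigma}(y,x)=\delta_{\sigma}(x,y)$.

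The substance of the proof is checking the two biderivation identities of Definition \ref{defi2.1}. For the first, I would abbreviate $x'=\sigma^{-1}(x)$, $y'=\sigma^{-1}(y)$, $z'=\sigma^{-1}(z)$, and use that $\sigma^{-1}$ is also a Lie algebra homomorphism, so $\sigma^{-1}([x,y])=[x',y']$. Expanding,
\begin{align*}
\delta_{\sigma}([x,y],z)
&=\sigma\bigl(\delta([x',y'],z')\bigr)\\
&=\sigma\bigl([x',\delta(y',z')]-[y',\delta(x',z')]\bigr),
\end{align*}
where the second equality is the first biderivation identity for $\delta$. Applying $\sigma$ to each bracket, using $\sigma([a,b])=[\sigma(a),\sigma(b)]$ together with $\sigma(x')=x$ and $\sigma(y')=y$, then yields exactly $[x,\delta_{\sigma}(y,z)]-[y,\delta_{\sigma}(x,z)]$, as required.

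The second identity is handled in the same fashion, substituting $x',y',z'$ into the second defining equation for $\delta$ and pushing $\sigma$ through the resulting brackets. Since both computations rely only on the homomorphism property of $\sigma$ (and its inverse) and the corresponding identity already known for $\delta$, there is no genuine obstacle. The only point requiring care is bookkeeping: one must apply $\sigma^{-1}$ to brackets \emph{before} invoking the biderivation identities for $\delta$, then apply $\sigma$ to brackets \emph{afterwards}, while tracking which variables are primed. This completes the verification that $\delta_{\sigma}$ is a symmetric biderivation.
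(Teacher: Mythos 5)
Your proof is correct and takes essentially the same approach as the paper: a direct verification of symmetry and the two biderivation identities, pushing $\sigma$ and $\sigma^{-1}$ through brackets via the homomorphism property. The only immaterial difference is at the last step, where the paper deduces the second identity from the already-established symmetry together with the first identity, while you verify it by a second direct computation; both are equally valid.
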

\begin{proof} We have
$$\delta_{\sigma}(x,y)=\sigma(\delta(\sigma^{-1}(x),\sigma^{-1}(y)))=\sigma(\delta(\sigma^{-1}(y),\sigma^{-1}(x)))=\delta_{\sigma}(y,x),$$
\begin{align*}
\delta_{\sigma}([x,y],z)&=\sigma(\delta([\sigma^{-1}(x),\sigma^{-1}(y)],\sigma^{-1}(z)))\\
&=\sigma\big([\sigma^{-1}(x), \delta(\sigma^{-1}(y),\sigma^{-1}(z))]-[\sigma^{-1}(y), \delta(\sigma^{-1}(x),\sigma^{-1}(z))]\big)\\
&=[x, \sigma(\delta(\sigma^{-1}(y),\sigma^{-1}(z)))]-[y, \sigma(\delta(\sigma^{-1}(x),\sigma^{-1}(z)))]\\
&=[x,\delta_{\sigma}(y,z)]-[y,\delta_{\sigma}(x,z)]),
\end{align*}
and
\begin{align*}
\delta_{\sigma}(x,[y,z])&=\delta_{\sigma}([y,z],x)\\
&=[y,\delta_{\sigma}(z,x)]-[z,\delta_{\sigma}(y,x)]\\
&=[\delta_{\sigma}(x,y),z]+[y,\delta_{\sigma}(x,z)]
\end{align*}
completing the proof.
\end{proof}
\begin{rema}
It follows from Lemma \ref{prop3} that there is a group action of ${\rm Aut\,}(L)$ on the space of symmetric biderivations.
\end{rema}
Now we introduce a very useful tool for the study of symmetric biderivations.
\begin{defi}
\rm
Denote
 $${\rm Rad\,}(L)=\{x\in L\mid \delta(x, L)=0\;\mbox{for any symmetric biderivation}\; \delta: L\times L\to L\},$$
which is called the {\em symmetric biderivation radical} of the Lie algebra $L$.
\end{defi}
Next we assemble a few simple properties about ${\rm Rad\,}(L)$.
\begin{prop}\label{propl}
The following statements hold.
\begin{itemize}
\item[(1)] ${\rm Rad\,}(L)$ is a  subalgebra of $L$;
\item[(2)] If $L$ is a finite dimensional Lie algebra, then ${\rm Rad\,}(L)$ is a closed subset of $L$ with respect to the Zariski topology;
\item[(3)] ${\rm Aut\,}(L)$ stabilizes ${\rm Rad\,}(L)$.
\end{itemize}
\end{prop}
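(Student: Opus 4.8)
The plan is to handle the three parts in turn, using only bilinearity of symmetric biderivations, the defining biderivation identities of Definition~\ref{defi2.1}, and the ${\rm Aut\,}(L)$-action from Lemma~\ref{prop3}.

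For (1) I would first note that ${\rm Rad\,}(L)$ is cut out by linear conditions: for each fixed symmetric biderivation $\delta$ and each fixed $z\in L$, the map $x\mapsto\delta(x,z)$ is linear, so its kernel is a subspace, and ${\rm Rad\,}(L)$ is the intersection of all such kernels as $\delta$ and $z$ vary. Hence ${\rm Rad\,}(L)$ is a linear subspace of $L$. To see it is a subalgebra it remains to check closure under the bracket: for $x,y\in{\rm Rad\,}(L)$, an arbitrary symmetric biderivation $\delta$, and $z\in L$, the first identity in Definition~\ref{defi2.1} gives
$$\delta([x,y],z)=[x,\delta(y,z)]-[y,\delta(x,z)].$$
Because $x,y\in{\rm Rad\,}(L)$ we have $\delta(x,z)=\delta(y,z)=0$, so the right-hand side vanishes; as $\delta$ and $z$ were arbitrary, $[x,y]\in{\rm Rad\,}(L)$.

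Part (2) is then immediate from (1): a linear subspace of the finite-dimensional space $L$ is the common zero set of finitely many linear forms, hence Zariski closed.

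The real content is (3), where the idea is to exploit equivariance of the radical under the group action of Lemma~\ref{prop3}. I would fix $\sigma\in{\rm Aut\,}(L)$ and $x\in{\rm Rad\,}(L)$ and show $\sigma(x)\in{\rm Rad\,}(L)$, i.e.\ $\delta(\sigma(x),z)=0$ for every symmetric biderivation $\delta$ and every $z\in L$. The key move is to apply the radical property of $x$ not to $\delta$ but to the transformed biderivation $\delta_{\sigma^{-1}}$, which is again symmetric by Lemma~\ref{prop3}. Writing $w=\sigma^{-1}(z)$ and using the formula of Lemma~\ref{prop3} with $\sigma$ replaced by $\sigma^{-1}$,
$$\delta_{\sigma^{-1}}(x,w)=\sigma^{-1}\big(\delta(\sigma(x),\sigma(w))\big)=\sigma^{-1}\big(\delta(\sigma(x),z)\big),$$
and the left-hand side is $0$ because $x\in{\rm Rad\,}(L)$; applying $\sigma$ yields $\delta(\sigma(x),z)=0$. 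Since $\delta$ and $z$ were arbitrary, $\sigma(x)\in{\rm Rad\,}(L)$, so $\sigma\big({\rm Rad\,}(L)\big)\subseteq{\rm Rad\,}(L)$; running the same argument with $\sigma^{-1}$ gives the reverse inclusion, so $\sigma$ stabilizes ${\rm Rad\,}(L)$. I expect the only delicate point to be the bookkeeping of $\sigma$ versus $\sigma^{-1}$; conceptually (3) is just the statement that ${\rm Rad\,}(L)$ is an invariant of the action.
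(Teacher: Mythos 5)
Your proposal is correct and follows the same route as the paper: parts (1) and (2) are the direct computations the paper leaves implicit, and your part (3) is exactly the paper's one-line identity $\delta(\sigma(x),y)=\sigma\big(\delta_{\sigma^{-1}}(x,\sigma^{-1}(y))\big)=0$, unwound with the $\sigma$-versus-$\sigma^{-1}$ bookkeeping made explicit. (Note that only the inclusion $\sigma({\rm Rad\,}(L))\subseteq{\rm Rad\,}(L)$ is needed for the paper's notion of stabilizing, so your final reverse-inclusion step is harmless but unnecessary.)
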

\begin{proof}
$(1)$ and $(2)$ follow from a direct computation.

$(3)$ follows from
$$\delta(\sigma(x), y))=\sigma(\delta_{\sigma^{-1}}(x,\sigma^{-1}(y)))=0,\ \ \ \ \forall x\in {\rm Rad\,}(L), y\in L, \sigma\in {\rm Aut\,}(L).$$
\end{proof}
In order to better investigate the last property of symmetric biderivation radical listed above,  we give the following definitions in Lie algebra case, analogous to those which arise in group theory.
\begin{defi}
\rm
\begin{itemize}
\item[(1)] A subalgebra $K$ of $L$ is called  a {\em characteristic subalgebra}  if $\sigma(K)\subseteq K$ for any $\sigma\in {\rm Aut\,}(L)$;
\item[(2)] An ideal $I$ of $L$ is called  a {\em characteristic ideal}  if $\sigma(I)\subseteq I$ for any $\sigma\in {\rm Aut\,}(L)$;
\item[(3)] $L$ is called {\em characteristically simple} if $L$ has no proper characteristic ideal, that is no characteristic ideal other than $L$ and $0$.
\end{itemize}
\end{defi}
According to the above definitions and Proposition \ref{propl}, we see that ${\rm Rad\,}(L)$ is a characteristic subalgebra of $L$ and any simple Lie algebra is characteristically  simple. In group theory, a finite characteristically  simple group is characterized by a direct sum of some isomorphic simple groups. Moreover, for the Lie algebra case, we also have the following parallel result.
\begin{prop}\label{theo11}
A finite dimensional Lie algebra $L$ is characteristically simple if and only if $L$ is a direct sum of some isomorphic simple Lie algebras.
\end{prop}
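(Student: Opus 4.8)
The plan is to prove both implications by exploiting the lattice of ideals of $L$ together with the action of ${\rm Aut\,}(L)$, the decisive structural input being that a sum of minimal ideals is automatically a direct sum of a subfamily of them.

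For the ``if'' direction, write $L=S_1\oplus\cdots\oplus S_k$ with each $S_i$ isomorphic to a fixed simple Lie algebra $S$. If the $S_i$ are one-dimensional, then $L$ is abelian, so ${\rm Aut\,}(L)={\rm GL}(L)$ acts transitively on the nonzero vectors, and the only subspaces stable under ${\rm Aut\,}(L)$ are $0$ and $L$; this gives characteristic simplicity at once. Otherwise $L$ is semisimple, and I would use that every ideal of $L$ is a sum of a subfamily of the $S_i$ (these being the minimal ideals of $L$), together with the fact that fixed isomorphisms $S\to S_i$ combine with an arbitrary permutation of the indices to produce an automorphism of $L$; hence ${\rm Aut\,}(L)$ acts transitively on $\{S_1,\dots,S_k\}$. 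Given a nonzero characteristic ideal $I=\bigoplus_{i\in J}S_i$ with $J$ proper, picking $i\in J$, $i'\notin J$ and $\sigma\in{\rm Aut\,}(L)$ with $\sigma(S_i)=S_{i'}$ would force $S_{i'}\subseteq\sigma(I)=I$, a contradiction. Thus $J=\{1,\dots,k\}$ and $I=L$, so $L$ is characteristically simple.

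For the ``only if'' direction, the main idea is to manufacture a characteristic ideal out of a minimal one. I would fix a minimal ideal $S$ of $L$ (which exists by finite-dimensionality) and set $M=\sum_{\sigma\in{\rm Aut\,}(L)}\sigma(S)$. Each $\sigma(S)$ is again a minimal ideal isomorphic to $S$, the sum $M$ is an ideal, and $M$ is visibly stable under ${\rm Aut\,}(L)$ since composing with $\tau$ merely reindexes the terms; hence $M$ is a nonzero characteristic ideal, and characteristic simplicity forces $M=L$. Thus $L$ is a sum of minimal ideals, all isomorphic to $S$. Next I would invoke the standard extraction argument: choose a subfamily of the $\sigma(S)$ whose sum is direct and maximal with this property; for any remaining $\sigma(S)$ its intersection with that sum is $0$ or all of $\sigma(S)$ by minimality, and the first case would contradict maximality, so every $\sigma(S)$ lies in the chosen sum. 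This yields $L=\bigoplus_{\sigma\in T}\sigma(S)$ for a finite set $T$.

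Finally I would check that each direct summand is simple. If $A$ is an ideal of the Lie algebra $\sigma(S)$, then $A$ commutes with every other summand and is stable under $\sigma(S)$, so $[L,A]\subseteq A$; hence $A$ is an ideal of $L$ contained in the minimal ideal $\sigma(S)$, and minimality gives $A=0$ or $A=\sigma(S)$. Therefore each summand has no proper nonzero ideals, so it is either simple or one-dimensional, and all summands are isomorphic to $S$; this exhibits $L$ as a direct sum of isomorphic simple Lie algebras, the one-dimensional case being precisely the abelian $L$ that mirrors an elementary abelian group. I expect the main obstacle to be the passage from ``sum of minimal ideals'' to an honest direct sum and the verification that minimal direct summands are genuinely simple; the delicate point throughout is keeping track of whether a minimal ideal might be abelian, which is exactly where the one-dimensional factors enter and where the analogy with the group-theoretic statement must be read with care.
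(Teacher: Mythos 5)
Your proposal is correct, and both directions run on the same engine as the paper's proof; the differences are organizational but worth recording. In the ``if'' direction you and the paper argue identically: every ideal of the semisimple algebra $L=S_1\oplus\cdots\oplus S_k$ is a sum of the minimal ideals $S_i$, and automorphisms permuting the summands rule out any proper nonzero characteristic ideal. In the ``only if'' direction the paper never forms your orbit sum $M=\sum_{\sigma\in{\rm Aut\,}(L)}\sigma(S)$: it instead considers the family $\mathcal{V}$ of ideals that are direct sums of minimal ideals isomorphic to $S$, picks $N\in\mathcal{V}$ of maximal dimension, and argues by contradiction --- if $N\neq L$ then $N$ cannot be characteristic, so $\sigma(S_i)\not\subseteq N$ for some $\sigma$ and $i$, minimality forces $N\cap\sigma(S_i)=0$, and $N\oplus\sigma(S_i)$ beats $N$. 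Your route applies characteristic simplicity exactly once, to an ideal that is characteristic by construction, and then performs the direct-sum extraction as a separate, purely lattice-theoretic step; the paper fuses these into a single maximality argument. Since both hinge on the same key fact (a minimal ideal meets any ideal in $0$ or in itself), the two proofs are equivalent in substance, though yours divides the labor more cleanly, and your justification that an ideal of a summand is an ideal of $L$ is spelled out where the paper merely asserts it. One further point in your favor: the delicate issue you flag at the end is real. The final step of both proofs shows only that each summand has no proper nonzero ideals, which does not make it simple if it is one-dimensional; indeed an abelian $L$ is characteristically simple (${\rm GL}(L)$ acts transitively on nonzero vectors) yet is not a direct sum of simple Lie algebras under the standard non-abelian convention. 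The paper passes over this silently, so the proposition as stated needs either to exclude the abelian case or to count one-dimensional algebras as ``simple''; your explicit caveat is the more careful reading.
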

\begin{proof}
First we assume that $L=S_1 \oplus S_2 \oplus \cdots \oplus S_k$ and $S_i's$  are isomorphic simple Lie algebras.  If $k=1$, then $L$ is a simple Lie algebra. It is certainly characteristically simple. In the following, suppose $k>1$. For any $i,j\in\{1,\ldots,k\}$ with $i\neq j$, it is apparent that there exists  $\sigma_{i,j}\in {\rm Aut\,}(L)$ such that $$\sigma_{i,j}(S_i)= S_j, \sigma_{i,j}(S_j)= S_i, \sigma_{i,j}(S_l)=S_l, \forall \,\,l\neq i,j.$$ Noticing that $L$ is semisimple and each ideal of $L$ is a sum of certain $S_i$'s. Then for any nonzero proper ideal $I$ of $L$, there exists
some $\sigma_{i,j}\in {\rm Aut\,}(L)$ such that
$\sigma_{i,j}(I) \not\subseteq I$. Consequently, $L$ is characteristically simple.

Conversely, suppose that $L$ is characteristically simple and $S$ is a minimal ideal of $L$. So $S\neq0$ and  it is possible that $S=L$. Let
$$\mathcal{V}=\{N\unlhd L\mid N=S_1 \oplus S_2 \oplus \cdots \oplus S_k, k\in\mathbb{N}, S_i\unlhd L, S_i\cong S, \forall\,1\leq i\leq k\},$$
where $N\unlhd L, S_i\unlhd L$ mean that $N$ and $S_i$ are ideals of $L$. Note that each $S_i$ is a minimal ideal  of $L$. As $S\in \mathcal{V}$,  $\mathcal{V}$ is certainly nonempty. Let $N=S_1\oplus S_2 \oplus \cdots \oplus S_k \in \mathcal{V}$ be of largest possible dimension. We assert that $N=L.$  Otherwise, $N$ is not a characteristic ideal since  $L$ is characteristically simple. There must exists a  $\sigma\in{\rm Aut\,}(L)$ such that $\sigma(N) \not\subseteq N.$  Namely, there exists $i$ such that $\sigma (S_i)  \not\subseteq N.$ Note that  $\sigma (S_i)\cong S$  is a minimal ideal of $L$. Since  $N\cap \sigma (S_i)$ is an ideal of $L$, and $N\cap \sigma (S_i)$ is properly contained in $\sigma (S_i)$, we see that $N\cap \sigma (S_i)=0$  by minimality of $\sigma (S_i)$. So
$$N\oplus\sigma (S_i)= S_1 \oplus S_2 \oplus \cdots \oplus S_k\oplus\sigma (S_i)  $$
is an ideal of $L$, which means that $N\oplus\sigma (S_i)\in  \mathcal{V}$, contradicting the choice of  $N$.
Therefore
$$L=N= S_1 \oplus S_2 \oplus \cdots \oplus S_k.$$
It remains to check that $S$ is simple. We may assume that $S=S_1$. If $I$ is an ideal of $S_1$, then $I$ is an ideal of  $L$. Note that $S_1$ is a minimal ideal of $L$, we see either $I=0$ or $I=S_1$. Hence $S_1$ and   also $S$ is simple, completing the proof.
\end{proof}
\section{Symmetric biderivations of finite-dimensional classical simple Lie algebras}
Let $\mathfrak{g}$ be a finite-dimensional classical simple Lie algebra over a field $\mathbb{F}$ with non-degenerate Killing form and ${\rm char\,} \mathbb{F}\neq 2, 3$ (cf. \cite{Carter,Hum}). If we fix a Cartan subalgebra $\mathfrak{h}$, then $\mathfrak{g}$ has a root space decomposition $\mathfrak{g}=\mathfrak{h}\oplus(\oplus_{\a\in\Phi}\mathfrak{g}_{\a})$, where $\Phi$ is the root system determined by  $\mathfrak{h}$, and $\mathfrak{g}_{\a}$ is the root space corresponding to the root $\a\in\Phi$ with ${\rm dim\,}\mathfrak{g}_{\a}=1$. Denote by $\Phi_{+}$  the set of positive roots, $\Pi$ the set of simple roots,  and $\theta$ the highest root, respectively. Let $\{e_{\alpha}, f_{\alpha}, h_{\beta}\mid \alpha \in \Phi_+,\beta\in\Pi\}$ be a Chevalley basis of $\mathfrak{g}$.

For ${\rm Rad\,}(\mathfrak{g})$, we have the following.
\begin{prop}\label{theo1}
If ${\rm char\,}\mathbb{F}\neq 2, 3$, then ${\rm Rad\,}(\mathfrak{g})\unlhd\mathfrak{g}$.
\end{prop}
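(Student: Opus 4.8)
The plan is to leverage Proposition \ref{propl}, which tells us that $R := {\rm Rad\,}(\mathfrak{g})$ is a subspace of $\mathfrak{g}$ stable under every $\sigma \in {\rm Aut\,}(\mathfrak{g})$. To upgrade this to $R\unlhd\mathfrak{g}$, I would show $[\mathfrak{g},R]\subseteq R$. Since $\mathfrak{g}=\mathfrak{h}\oplus\big(\oplus_{\gamma\in\Phi}\mathfrak{g}_{\gamma}\big)$ and each $h_{\gamma}$ lies in $[\mathfrak{g}_{\gamma},\mathfrak{g}_{-\gamma}]$, it is enough to prove $[\mathfrak{g}_{\gamma},R]\subseteq R$ for all $\gamma\in\Phi$: the Cartan part is then recovered from the Jacobi identity, for if $[\mathfrak{g}_{\pm\gamma},R]\subseteq R$ and $x\in R$ then
\[
[[e_{\gamma},f_{\gamma}],x]=[e_{\gamma},[f_{\gamma},x]]-[f_{\gamma},[e_{\gamma},x]]\in R,
\]
and the $h_{\gamma}=[e_{\gamma},f_{\gamma}]$ span $\mathfrak{h}$, giving $[\mathfrak{h},R]\subseteq R$ and hence $[\mathfrak{g},R]\subseteq R$.

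To establish $[\mathfrak{g}_{\gamma},R]\subseteq R$ I would use the inner (Chevalley) automorphisms attached to the root vectors. Because ${\rm char\,}\F\neq 2,3$, the operator ${\rm ad\,}e_{\gamma}$ is nilpotent of small index (at most $4$ for the classical types) and the factorials $2!,3!,4!$ are invertible in $\F$, so
\[
x_{\gamma}(t):=\exp\!\big(t\,{\rm ad\,}e_{\gamma}\big)=\sum_{k\geq 0}\frac{t^{k}}{k!}\,({\rm ad\,}e_{\gamma})^{k}
\]
is a genuine automorphism of $\mathfrak{g}$ for every $t\in\F$. Invariance of $R$ then yields, for each fixed $x\in R$,
\[
x_{\gamma}(t)(x)=\sum_{k=0}^{d}t^{k}\,c_{k}(x)\in R,\qquad c_{k}(x):=\tfrac{1}{k!}({\rm ad\,}e_{\gamma})^{k}(x),
\]
for all $t\in\F$, where $d\le 4$ is the degree in $t$.

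The decisive step is to extract the linear coefficient $c_{1}(x)=[e_{\gamma},x]$. Passing to the quotient space $\mathfrak{g}/R$, the displayed expression becomes a polynomial in $t$ of degree at most $d\le 4$, with coefficients $\overline{c_{k}(x)}$, that vanishes at every $t\in\F$. Since ${\rm char\,}\F\neq 2,3$ forces $|\F|\geq 5 > d$, there are strictly more evaluation points than the degree, so a Vandermonde/interpolation argument forces every $\overline{c_{k}(x)}=0$, i.e.\ $c_{k}(x)\in R$. In particular $[e_{\gamma},x]\in R$. Running this for all $\gamma\in\Phi$ (both $e_{\gamma}$ and $f_{\gamma}$ are root vectors, so $\exp(t\,{\rm ad\,}f_{\gamma})$ is available as well) and invoking the Jacobi reduction above gives $[\mathfrak{g},R]\subseteq R$.

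The main obstacle is precisely the passage from ``$x_{\gamma}(t)(x)\in R$ for all $t$'' to ``$[e_{\gamma},x]\in R$'' over an arbitrary, possibly finite, field, where differentiation in $t$ and limiting arguments are unavailable. This is where the hypothesis ${\rm char\,}\F\neq 2,3$ does double duty: it simultaneously guarantees that $x_{\gamma}(t)$ is a bona fide automorphism (the occurring factorials are invertible) and that $\F$ has enough elements ($|\F|\geq 5$, exceeding the degree in $t$) for the interpolation to push all coefficients into $R$. Once this is secured, $R\unlhd\mathfrak{g}$ follows as above.
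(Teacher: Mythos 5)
Your proof is correct and is essentially the paper's own argument: both rest on the ${\rm Aut\,}(\mathfrak{g})$-stability of ${\rm Rad\,}(\mathfrak{g})$ (Proposition \ref{propl}), the exponential automorphisms $\exp(t\,{\rm ad\,}e_{\gamma})$ attached to nilpotent root vectors, and a Vandermonde/interpolation argument (legitimate because ${\rm char\,}\mathbb{F}\neq 2,3$ forces $|\mathbb{F}|\geq 5$, exceeding the degree in $t$) to obtain $[\mathfrak{g}_{\gamma},{\rm Rad\,}(\mathfrak{g})]\subseteq {\rm Rad\,}(\mathfrak{g})$, followed by a routine extension to all of $\mathfrak{g}$ (the paper via generation by root spaces, you via the Jacobi identity on the Cartan part). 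One caveat: invertibility of $2!,3!,4!$ by itself does not make $\exp(t\,{\rm ad\,}e_{\gamma})$ multiplicative --- for a general derivation $D$ with $D^{4}=0$ over a field of characteristic $5$, $\exp(D)$ need not be an automorphism, since checking $\exp(D)[x,y]=[\exp(D)x,\exp(D)y]$ involves factorials $k!$ up to $k=6$ --- so this step needs the paper's citation of \cite[Lemma 2.8]{BGP} (or the integrality of Chevalley's divided powers) rather than your stated reason.
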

\begin{proof}
Keep in mind that ${\rm Rad\,}(\mathfrak{g})$ is a characteristic subalgebra by Proposition \ref{propl}. It suffices to show that ${\rm Rad\,}(\mathfrak{g})$ is also an idea of $\mathfrak{g}$, i.e., we need to show that $[x,r]\in {\rm Rad\,}(\mathfrak{g})$ for any $r\in {\rm Rad\,}(\mathfrak{g})$ and $x\in\mathfrak{g}$. Note that  $\mathfrak{g}$ is generated by  $\mathfrak{g}_{\alpha}(\alpha\in\Phi)$ as a Lie algebra. We just need to prove that $[x_{\a},r]\in {\rm Rad\,}(\mathfrak{g})$ for any $\a\in \Phi$. Recall that any root string is  of length at most $4$. So, for any $\a\in \Phi$, we have $({\rm ad\,}x_{\a})^4=0$. It follows from \cite[Lemma 2.8]{BGP} that
$${\rm exp\,}(\lambda {\rm ad\,}x_{\a})=\id+\lambda\,{\rm ad\,}x_{\a}+\lambda^2\frac{({\rm ad\,}x_{\a})^2}{2}+\lambda^3\frac{({\rm ad\,}x_{\a})^3}{6}\in {\rm Aut\,}(\mathfrak{g}),\ \ \ \ \forall\,\, \lambda \in \mathbb{F}.$$
Applying ${\rm exp\,}(\lambda {\rm ad\,}x_{\a})$ to $r$ and using the fact that ${\rm Rad\,}(\mathfrak{g})$ is a characteristic subalgebra, we get
$$r+\lambda[x_{\a},r]+\lambda^2\frac{({\rm ad\,}x_{\a})^2r}{2}+\lambda^3\frac{({\rm ad\,}x_{\a})^3r}{6}\in {\rm Rad\,}(\mathfrak{g}),\ \ \ \ \forall\,\, \lambda \in \mathbb{F},$$
from which we can obtain a linear equation system whose coefficient matrix is exactly the Vandermonde matrix. Thus $[x_{\a},r]\in{\rm Rad\,}(\mathfrak{g})$ and hence the proposition follows.
\end{proof}

We hope that ${\rm Rad\,}(\mathfrak{g})$ is always an ideal of any Lie algebra $\mathfrak{g}$. However, we are not able to do so in this paper.

We are now in a position to present the following main result, which together with \cite{B5} recovers and generalize the main results in \cite{W2} and \cite{T1} where finite-dimensional   simple Lie algebras over an algebraically closed field $\mathbb{F}$ of characteristic $0$ were considered.

\begin{theo}\label{prop7}
Let $\mathfrak{g}$ be a finite-dimensional classical simple Lie algebra over a field $\mathbb{F}$ with non-degenerate Killing form and ${\rm char\,} \mathbb{F}\neq 2, 3$. Then every symmetric biderivation of $\mathfrak{g}$ is trivial.
 \end{theo}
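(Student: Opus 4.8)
The plan is to exploit that, by Proposition~\ref{theo1} together with Proposition~\ref{propl}, ${\rm Rad\,}(\mathfrak{g})$ is a characteristic \emph{ideal} of $\mathfrak{g}$; since $\mathfrak{g}$ is simple this forces ${\rm Rad\,}(\mathfrak{g})\in\{0,\mathfrak{g}\}$. By the very definition of the radical, ${\rm Rad\,}(\mathfrak{g})=\mathfrak{g}$ means precisely that $\delta(x,\mathfrak{g})=0$ for every $x$ and every symmetric biderivation $\delta$, i.e. that every symmetric biderivation is trivial. Hence proving the theorem is equivalent to proving ${\rm Rad\,}(\mathfrak{g})\neq 0$, and for this it suffices to exhibit one nonzero element of ${\rm Rad\,}(\mathfrak{g})$. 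I would take the highest root vector $e_{\theta}$ and show $\delta(e_{\theta},\mathfrak{g})=0$ for \emph{every} symmetric biderivation $\delta$.

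First I would note that for each $\alpha\in\Phi_{+}\setminus\{\theta\}$ one has $\theta+\alpha\notin\Phi$, hence $[e_{\theta},e_{\alpha}]=0$, so Lemma~\ref{prop2} gives $\delta(e_{\theta},e_{\alpha})=0$. Since $\delta(e_{\theta},\cdot)$ is a derivation and the $e_{\alpha_{i}}$ $(\alpha_{i}\in\Pi)$ generate the nilradical $\mathfrak{n}^{+}=\oplus_{\alpha\in\Phi_{+}}\mathfrak{g}_{\alpha}$, it follows that $\delta(e_{\theta},\cdot)$ annihilates all of $\mathfrak{n}^{+}$, in particular $\delta(e_{\theta},e_{\theta})=0$. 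Now I would use that a simple Lie algebra with non-degenerate Killing form admits only inner derivations, so $\delta(e_{\theta},\cdot)={\rm ad\,}w$ for some $w\in\mathfrak{g}$; vanishing on $\mathfrak{n}^{+}$ says $w$ centralizes $\mathfrak{n}^{+}$, and since the centralizer of $\mathfrak{n}^{+}$ in $\mathfrak{g}$ is exactly the top root space $\mathfrak{g}_{\theta}=\mathbb{F}e_{\theta}$, we get $\delta(e_{\theta},y)=c\,[e_{\theta},y]$ for all $y$ and some scalar $c$. The entirely parallel argument applied to the lowest root vector $f_{\theta}$ (using $\mathfrak{n}^{-}$) yields $\delta(f_{\theta},y)=c'[f_{\theta},y]$, and comparing $\delta(e_{\theta},f_{\theta})=\delta(f_{\theta},e_{\theta})$ gives $c'=-c$.

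The crux is then to prove $c=0$. For this I would restrict to the $\mathfrak{sl}_{2}$-triple $(e_{\theta},h_{\theta},f_{\theta})$ with $[h_{\theta},e_{\theta}]=2e_{\theta}$, $[h_{\theta},f_{\theta}]=-2f_{\theta}$. Feeding $\delta(e_{\theta},\cdot)=c\,{\rm ad\,}e_{\theta}$ and $\delta(f_{\theta},\cdot)=-c\,{\rm ad\,}f_{\theta}$ into the biderivation identity $\delta([e_{\theta},f_{\theta}],y)=[e_{\theta},\delta(f_{\theta},y)]-[f_{\theta},\delta(e_{\theta},y)]$ computes $\delta(h_{\theta},\cdot)$ on the triple explicitly; in particular $\delta(h_{\theta},e_{\theta})=-2c\,e_{\theta}$ and $\delta(h_{\theta},h_{\theta})=-4c\,h_{\theta}$. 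Imposing that $\delta(h_{\theta},\cdot)$ is a derivation on $[h_{\theta},e_{\theta}]=2e_{\theta}$ then yields $-4c\,e_{\theta}=-12c\,e_{\theta}$, i.e. $8c\,e_{\theta}=0$; as ${\rm char\,}\mathbb{F}\neq 2$ this forces $c=0$. Thus $\delta(e_{\theta},\mathfrak{g})=0$, so $e_{\theta}\in{\rm Rad\,}(\mathfrak{g})$, whence ${\rm Rad\,}(\mathfrak{g})\neq 0$ and therefore ${\rm Rad\,}(\mathfrak{g})=\mathfrak{g}$, which is exactly the assertion of the theorem.

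I expect two main obstacles. First, the inputs ``all derivations are inner'' and ``the centralizer of $\mathfrak{n}^{+}$ is $\mathfrak{g}_{\theta}$'' must be justified over an arbitrary field with ${\rm char\,}\mathbb{F}\neq 2,3$ rather than merely quoted in characteristic $0$; both follow from non-degeneracy of the Killing form together with $Z(\mathfrak{g})=0$, but this should be stated with care, and it is here that the hypotheses on the characteristic are really consumed. Second, the rank-one case $\mathfrak{g}=\mathfrak{sl}_{2}$ (where $\Phi_{+}\setminus\{\theta\}=\emptyset$, so the generation argument for $\mathfrak{n}^{+}$ is vacuous) falls outside the scheme above and must be settled by a short direct computation on the basis $\{e,h,f\}$, which once again produces a relation of the form $8c=0$ and closes the argument using ${\rm char\,}\mathbb{F}\neq 2$.
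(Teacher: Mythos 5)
Your proposal is correct, and while it shares the paper's global framework --- exhibit nonzero elements of ${\rm Rad\,}(\mathfrak{g})$, then invoke Proposition \ref{propl}/\ref{theo1} (the radical is a characteristic ideal) and simplicity to force ${\rm Rad\,}(\mathfrak{g})=\mathfrak{g}$ --- the way you get into the radical is genuinely different. The paper proves $\mathfrak{h}\subseteq{\rm Rad\,}(\mathfrak{g})$: from Lemma \ref{prop1} it gets $\delta(h_{\alpha},h)=-2\alpha(h)\delta(e_{\alpha},f_{\alpha})$, uses innerness of derivations to see that $\delta(h_{\theta},h_{\theta})$ and $\delta(e_{\theta},f_{\theta})$ lie in complementary subspaces (hence both vanish), transports this to all long positive roots via the Weyl group and Lemma \ref{prop3}, and uses that the long roots span $\mathfrak{h}^{*}$ to kill $\delta(\mathfrak{h},\mathfrak{h})$ and then $\delta(\mathfrak{h},\cdot)$ entirely. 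You instead put the single element $e_{\theta}$ into the radical: Lemma \ref{prop2} plus generation of $\mathfrak{n}^{+}$ by simple root vectors gives $\delta(e_{\theta},\mathfrak{n}^{+})=0$; innerness plus $C_{\mathfrak{g}}(\mathfrak{n}^{+})=\mathbb{F}e_{\theta}$ gives $\delta(e_{\theta},\cdot)=c\,{\rm ad\,}e_{\theta}$; and the derivation identity for $\delta(h_{\theta},\cdot)$ applied to $[h_{\theta},e_{\theta}]=2e_{\theta}$ yields $8c\,e_{\theta}=0$, so $c=0$ since ${\rm char\,}\mathbb{F}\neq 2$. I checked your computations $\delta(h_{\theta},e_{\theta})=-2c\,e_{\theta}$ and $\delta(h_{\theta},h_{\theta})=-4c\,h_{\theta}$, the identity $c'=-c$, and the separate rank-one computation (where symmetry and innerness alone already force $\delta=0$ for $\mathfrak{sl}_{2}$ when ${\rm char\,}\mathbb{F}\neq 2$); all are sound. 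What each approach buys: the paper's route leans only on standard root-system facts (Weyl conjugacy of long roots to $\theta$, spanning by long roots) and treats all ranks uniformly; yours needs just one radical element and a one-line $\mathfrak{sl}_{2}$ relation, at the price of a centralizer computation and a separate rank-one case.

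One caution on the point you flagged: the fact $C_{\mathfrak{g}}(\mathfrak{n}^{+})=\mathfrak{g}_{\theta}$ does \emph{not} follow merely from non-degeneracy of the Killing form and $Z(\mathfrak{g})=0$, as you suggest. The correct justification is the root-space argument: the centralizer is ${\rm ad\,}\mathfrak{h}$-stable, hence decomposes into its intersections with $\mathfrak{h}$ and the root spaces (distinct roots remain distinct functionals on $\mathfrak{h}$ for classical $\mathfrak{g}$ with non-degenerate Killing form and ${\rm char\,}\mathbb{F}\neq 2,3$); the $\mathfrak{h}$-part dies because the roots span $\mathfrak{h}^{*}$, the negative parts die because $[f_{\alpha},e_{\alpha}]=-h_{\alpha}\neq 0$, and the parts $\mathfrak{g}_{\alpha}$ with $\alpha\in\Phi_{+}\setminus\{\theta\}$ die because every such $\alpha$ can be raised by a simple root with Chevalley structure constant $\pm 1,\pm 2,\pm 3$, which is nonzero precisely because ${\rm char\,}\mathbb{F}\neq 2,3$ --- the same structure-constant fact that underlies your generation claim for $\mathfrak{n}^{+}$. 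With that justification supplied, your argument is complete.
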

\begin{proof}
Let $\delta: \mathfrak{g}\times \mathfrak{g}\to \mathfrak{g}$ be a symmetric biderivation. It follows from Lemma \ref{prop1}  that
\begin{equation}\label{equa1}\delta(h_{\alpha}, h)=\delta([e_{\alpha},f_{\alpha}], h)=-2\alpha(h)\delta(e_{\alpha}, f_{\alpha}),\ \ \ \ \forall \a\in\Phi_+, h\in\mathfrak{h}.
\end{equation}
For any fixed $\a\in\Phi_+$, taking $h=h_{\alpha}$ in the above equation gives
\begin{equation}\label{equa2}
\delta(h_{\alpha}, h_{\alpha})=-4\delta(e_{\alpha}, f_{\alpha}),\ \ \ \ \forall \a\in\Phi_+.
\end{equation}
Since $\delta(y,\cdot)$ is a derivation for any $y\in \mathfrak{g}$ and every derivation of $\mathfrak{g}$ is inner (cf. \cite[Theorem 5.3]{Hum}),
it follows that $\delta(h_{\alpha}, h_{\alpha})\in \sum_{\beta\in\Phi}\mathfrak{g}_{\beta}$ for any $\a\in\Phi_+$ and $\delta(e_{\theta}, f_{\theta})\in\frak{h}$. These along with \eqref{equa2} give $\delta(h_{\theta}, h_{\theta})=\delta(e_{\theta}, f_{\theta})=0$. From the fact that each long positive root can become a highest root by the action of Weyl group (cf. \cite[\S\,10.4, Lemma C]{Hum}) and Lemma \ref{prop3}, we see that $\delta(h_{\alpha}, h_{\alpha})=\delta(e_{\alpha}, f_{\alpha})=0$ for any long positive root $\alpha$. Then it follows from \eqref{equa1} that $\delta(h_{\alpha}, h)=0$ for any $h\in\mathfrak{h}$ and long positive root $\alpha$. Since $\Phi$ is spanned by all long positive roots(\cite[Proposition 8.18]{Carter}), we further obtain
$$\delta(\frak{h}, \frak{h})=0,\quad \delta(e_{\alpha}, f_{\alpha})=0,\ \ \ \ \forall\alpha\in \Phi_+.$$
Take $\beta,\gamma\in \Phi_+$ such that $[ce_{\beta}, e_{\gamma}]=e_{\theta}$ for some $c\in \F$. By Lemma \ref{prop1}, we have $\delta(e_{\theta},e_{\theta})=\delta([ce_{\beta}, e_{\gamma}], e_{\theta})=0$. Then for any long positive root $\alpha$, $\delta(e_{\alpha},e_{\alpha})=0$, which in turn forces
\begin{equation}\label{equa22}
\delta(h_{\alpha},e_{\alpha})=\delta([e_{\alpha},f_{\alpha}],e_{\alpha})=0
\end{equation}
by the definition of biderivations. It is well-known that for every $\alpha\in\Phi$,  the set ${\rm Ker\,}\alpha=\{h\in\mathfrak{h}\mid \alpha(h)=0\}$ is a proper subspace of $\mathfrak{h}$. Therefore, one can find a vector $h_0\in\mathfrak{h}\setminus \cup_{\alpha\in\Phi}{\rm Ker\,}\alpha$. For any $h\in \frak{h}$, let $\delta(h, \cdot)=\sum_{\beta}a_{\beta}{\rm ad\,}x_{\beta}+\mathrm{ad}h'$ with $a_{\beta}\in \mathbb{F}, x_{\beta}\in \mathfrak{g}_{\beta}, h'\in\frak{h}$. Since  $\delta(h, h_0)=0$, it follows that $a_{\beta}=0$ for $\beta\in\Phi$. That is, $\delta(h, \cdot)=\mathrm{ad}h'$. Now for any long positive root $\alpha$, from \eqref{equa22}, we see that
$$2\alpha(h')e_{\alpha}=2\delta(h,e_{\alpha})=\delta([h_{\alpha},e_{\alpha}],h)=\delta([h,e_{\alpha}],h_{\alpha})=0,$$
which yields  $h'=0$ by the fact that $\Phi$ is spanned by the long roots. Consequently, $\mathfrak{h}\subseteq{\rm Rad\,}(\mathfrak{g})$. Thanks to Proposition \ref{propl} together with \cite[Corollary 2.1.13]{CM}  or Theorem \ref{theo1} together with the assumption that $\mathfrak{g}$ is a simple Lie algebra, ${\rm Rad\,}(\mathfrak{g})=\mathfrak{g}$, that is, $\delta$ is trivial. We complete the proof.
\end{proof}


\section{Biderivations of $\mathcal{W}^+_n$}
In this section, we assume that $\mathbb{F}$ is a field of characteristic $0$. For $n\in\N$, let $A_n=\mathbb{F}[t_1, t_2,\ldots,t_n]$ be the polynomial algebra and $\mathcal{W}^+_n={\rm Der\,}(A_n)$ be the Witt algebra. For any $\a=(\a_1, \a_2,\ldots,\a_n)\in\Z_+^n$, we denote $t^{\a}=t_{1}^{\a_1}\cdots t_{n}^{\a_n}$  and $|\alpha|=\alpha_1+\cdots+\alpha_n$, respectively.  Set $d_i=\frac{\partial}{\partial t_i}$ and $\mathcal{D}=\mathrm{span}_{\mathbb{F}}\{d_1,d_2,\ldots,d_n\}.$ Then $\mathcal{W}^+_n$ is a free $A_n$-module with basis $d_i, 1\leq i\leq n$, i.e.,  $\mathcal{W}^+_n=\oplus_{i=1}^nA_nd_i$ with the Lie bracket as follows:
$$[fd_i,gd_j]=fd_i(g)d_j-gd_j(f)d_i, \ \ \ f,g\in \mathbb{F}[t_1,\ldots,t_n], i,j=1,\ldots,n.$$
It is known that $\oplus_{i=1}^n\mathbb{F}t_id_i$ is the Cartan subalgebra of  $\mathcal{W}^+_n$.
\begin{lemm}\label{lemma1}
Keep notations as above. Then $\mathcal{D}\subseteq {\rm Rad\,}(\mathcal{W}^+_n).$
\end{lemm}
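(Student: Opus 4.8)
The plan is to fix $i$ and show that $\delta(d_i,\cdot)=0$ for every symmetric biderivation $\delta$; since ${\rm Rad}(\mathcal{W}^+_n)$ is a subspace, this gives $d_i\in{\rm Rad}(\mathcal{W}^+_n)$ for each $i$ and hence $\mathcal{D}\subseteq{\rm Rad}(\mathcal{W}^+_n)$. First I would use that $\delta(d_i,\cdot)$ is a derivation of $\mathcal{W}^+_n$ (the Remark after Definition \ref{defi2.1}). Recalling that in characteristic $0$ every derivation of $\mathcal{W}^+_n$ is inner, there is $w_i\in\mathcal{W}^+_n$ with $\delta(d_i,\cdot)={\rm ad}(w_i)$, and since $\mathcal{W}^+_n$ is centerless it suffices to prove $w_i=0$.

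Next I would exploit the weight structure. With respect to the Cartan subalgebra $\mathfrak{h}=\oplus_k\mathbb{F}t_kd_k$, each $t^{\alpha}d_j$ is a weight vector whose weight coincides with that of $d_i$ precisely when $(\alpha,j)=(0,i)$, while $[d_i,t^{\alpha}d_j]=\alpha_i\,t^{\alpha-e_i}d_j$ vanishes exactly when $\alpha_i=0$. Hence Lemma \ref{prop2} yields $\delta(d_i,t^{\alpha}d_j)=0$, i.e. $[w_i,t^{\alpha}d_j]=0$, for all $\alpha$ with $\alpha_i=0$ and $(\alpha,j)\neq(0,i)$. For $n\ge 2$, the choice $\alpha=0$, $j\neq i$ shows $w_i$ depends only on $t_i$; the choices $t_jd_k$ with $j\neq i$ then force all $d_j$-coefficients ($j\neq i$) of $w_i$ to vanish and its remaining $t_i$-dependence to be constant, so that $w_i=c_id_i$ for some scalar $c_i\in\mathbb{F}$. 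One also checks directly that $\delta(d_i,d_i)=0$ by writing $d_i=[d_j,t_jd_i]$ ($j\neq i$) and using the derivation property together with the already established vanishing.

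The remaining point, which I expect to be the main obstacle, is to prove $c_i=0$: the operator $c_i\,{\rm ad}(d_i)$ is consistent with symmetry, with the cyclic identity of Lemma \ref{prop1}, and with the second-slot derivation law for \emph{every} value of $c_i$, so only the first-slot biderivation axiom can detect it. To expose this, I would restrict $\delta$ to the subalgebra $\mathcal{A}_i={\rm span}\{X_m:=t_i^{\,m+1}d_i\mid m\ge-1\}$, which is closed, satisfies $[X_j,X_k]=(k-j)X_{j+k}$, is isomorphic to $\mathcal{W}^+_1$, and carries $X_{-1}=d_i$ with $\delta(X_{-1},X_k)=c_i(k+1)X_{k-1}$. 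Applying Lemma \ref{prop1} with $x=X_{-1}$ and climbing the index determines $\delta$ on $\mathcal{A}_i$ inductively; in particular $\delta(X_0,X_k)=c_i(k+2)X_k$ and $\delta(X_1,X_k)=c_i(k+3)X_{k+1}$. Feeding these into the first-slot identity $\delta([X_0,X_1],X_k)=[X_0,\delta(X_1,X_k)]-[X_1,\delta(X_0,X_k)]$ and comparing coefficients of $X_{k+1}$ gives $c_i(k+3)=c_i(3k+5)$, i.e. $2(k+1)c_i=0$; choosing $k\neq-1$ forces $c_i=0$. Since $\mathcal{W}^+_n$ is centerless this yields $\delta(d_i,\cdot)=0$, as required.

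Finally, the case $n=1$ must be treated slightly differently, because there $d_1$ commutes with no weight vector of different weight and Lemma \ref{prop2} gives no information. In that case I would keep the general inner element $w_1\in\mathcal{W}^+_1$, propagate $\delta(\cdot,\cdot)$ on $\mathcal{W}^+_1=\mathcal{A}_1$ from $\delta(X_{-1},\cdot)={\rm ad}(w_1)$ via Lemma \ref{prop1}, and impose the same first-slot identity; the resulting coefficient comparison forces $w_1=0$. Thus the decisive ingredient in every case is the incompatibility of a nonzero (even inner-looking) multiple of ${\rm ad}(d_i)$ with the first-argument biderivation law, revealed on the $\mathcal{W}^+_1$-subalgebra generated by $d_i$ and $t_id_i$.
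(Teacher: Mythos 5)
Your proof is correct, and its decisive step takes a genuinely different route from the paper's. Both arguments rest on the inner-ness of derivations of $\mathcal{W}^+_n$ (\cite{DZ}) and, for $n\ge 2$, on the commuting-weight-vector Lemma \ref{prop2}; but the paper's engine is the automorphism action of Lemma \ref{prop3} --- conjugating by $\exp({\rm ad\,}d_j)$, $\exp({\rm ad\,}d_i)$ and, for $n=1$, by $\exp(\lambda\,{\rm ad\,}\partial_{-1})$ with a Vandermonde argument --- whereas you never invoke automorphisms at all. For $n\ge 2$ your weight argument pins $w_i$ down only to $c_id_i$ (the paper's automorphism trick kills $w_i$ outright), and you then remove the residual constant by a purely equational computation inside the rank-one subalgebra spanned by $X_m=t_i^{m+1}d_i$: I checked that the cyclic identity (Lemma \ref{prop1}) plus symmetry indeed give $\delta(X_0,X_k)=c_i(k+2)X_k$ and $\delta(X_1,X_k)=c_i(k+3)X_{k+1}$, and that the first-slot axiom then yields $c_i(k+3)=c_i(3k+5)$, hence $c_i=0$ in characteristic $0$. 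Your $n=1$ case is only sketched, but the same mechanics do close it: with $w_1=\sum_j a_jX_j$, propagation gives $\delta(X_0,X_k)=[w_1,X_{k+1}]$ and $\delta(X_1,X_k)=[w_1,X_{k+2}]$, and the first-slot identity forces $[[X_0,w_1],X_{k+2}]-[[X_1,w_1],X_{k+1}]+[w_1,X_{k+2}]=0$, whose coefficient of $X_{j+k+2}$ is $(2k+2)a_j$, so $w_1=0$; this replaces the paper's automorphism-plus-Vandermonde finish for $\partial_{-1}$ by direct coefficient comparison. As for what each approach buys: the paper's method needs very few explicit values and fits its guiding philosophy that ${\rm Rad\,}(L)$ is a characteristic subalgebra stable under ${\rm Aut\,}(L)$, while yours is automorphism-free and treats $n=1$ and $n\ge 2$ by one uniform rank-one computation. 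Two minor cautions: ``restricting $\delta$ to $\mathcal{A}_i$'' is loose wording, since a priori $\delta$ need not map $\mathcal{A}_i\times\mathcal{A}_i$ into $\mathcal{A}_i$ (your computations avoid this issue because every value you use is shown to lie in $\mathcal{A}_i$); and your aside that only the first-slot axiom can detect $c_i$ is not quite accurate --- the second-slot derivation law applied to the propagated values $\delta(X_0,\cdot)$ would also force $c_i=0$ --- but nothing in your argument depends on that remark.
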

\begin{proof} Consider first the situation when $n>1$. It is sufficient to show that $d_i\in{\rm Rad\,}(\mathcal{W}^+_n)$ for any $i=1,\ldots,n$.
Observe that ${\rm exp\,}({\rm ad\,} d_i)\in {\rm Aut\,}(\mathcal{W}^+_n)$ for any $i\in \{1,\ldots,n\}$ because ${\rm ad\,}d_i$ is locally nilpotent.  Take any $i\neq j\in\{1,\ldots,n\}$. From  Lemmas \ref{prop2} and \ref{prop3}, for any symmetric biderivation $\delta$ of $\mathcal{W}^+_n$ and $k\in\Z_+$, we have $\delta(t_i^kd_i,t_i^kt_jd_i)=0$
and
\begin{align*}
\delta\big(({\rm exp\,}({\rm ad\,} d_j))(t_i^kd_i),({\rm exp\,}({\rm ad\,} d_j))(t_i^kt_jd_i)\big)=\delta(t_i^kd_i,t_i^kt_jd_i+t_i^kd_i)=0,
\end{align*}
which imply
\begin{equation*}\delta(t_i^kd_i,t_i^kd_i)=0,\ \ \ \ \forall\, k\in\Z_+.\end{equation*}
In particular, we have
\begin{equation}\label{vb2}\delta(d_i,d_i)=0\ \ \ \ \ {\rm and\,}\ \ \ \ \delta(t_id_i,t_id_i)=0.\end{equation}
Since each derivation of $\mathcal{W}^+_n$ is inner \cite{DZ}, we may assume that $\delta(d_i,\cdot)=\mathrm{ad}(\sum_{p=1}^nf_pd_p)$ with $f_1,\ldots,f_n\in A_n$ . Substituting this into the first equality of \eqref{vb2}, we obtain
$$[d_i,\sum_{p=1}^nf_pd_p]=\sum_{p=1}^nd_i(f_p)d_p=0,$$
which forces
\begin{equation}\label{vb4}d_i(f_p)=0, \ \ \ \ \ \forall\, p=1,\ldots,n.\end{equation}
Meanwhile, the second equality of \eqref{vb2} together with  Lemma \ref{prop3} gives
\begin{align*}
\delta\big(({\rm exp\,}({\rm ad\,}  d_i))(t_id_i),({\rm exp\,}({\rm ad\,} d_i))(t_id_i)\big)
=\delta(t_id_i+ d_i,t_id_i+ d_i)=0,
\end{align*}
yielding that  $\delta(d_i,t_id_i)=0$. In addition, we know that  $\delta(d_i,t_qd_i)=0$ provided that $q\neq i$ by Lemma \ref{prop2}. Putting these together gives $\delta(d_i,t_sd_i)=0, s=1,\ldots,n$. Combining this with  the expression of $\delta(d_i,\cdot)$ and  \eqref{vb4}, we get  for any $s=1,\ldots,n$,
$$0=\Big[\sum_{p=1}^nf_pd_p,t_sd_i\Big]=f_sd_i,$$ which in turn forces $f_p=0, p=1,\ldots,n$. As a result, $\delta(d_i,\cdot)=0$.

Assume now that $n=1$.  Denote $\partial_i=t_1^{i+1}d_1$ for any $i\geq -1$ with  $\partial_{-2}=0$. Then we have
 $[\partial_i,\partial_{j}]=(j-i)\partial_{i+j}$ for any $i,j\geq -1$.
Since each derivation of ${\mathcal{W}}_1^+$ is inner \cite{DZ}, we can write
\begin{equation}\label{vb}
\delta(\partial_0,\cdot)=\mathrm{ad}(\sum_{j\geq -1}a_j\partial_j)\quad{\rm and\,}\quad  \delta(\partial_{-1},\cdot)=\mathrm{ad}(\sum_{l\geq -1}b_l\partial_l)  \end{equation}
with $a_j,b_l\in \F$ for $j,l\geq -1$. By Lemma \ref{prop1}, we have
\begin{align*}
(i+1)\delta(\partial_0,\partial_{i-1})&=\delta(\partial_0,[\partial_{-1},\partial_i])\\
&=-\delta(\partial_i,[\partial_{0},\partial_{-1}])-\delta(\partial_{-1},[\partial_{i},\partial_0])\\
&=\delta(\partial_i,\partial_{-1})+i\delta(\partial_{-1},\partial_{i})=(i+1)\delta(\partial_i,\partial_{-1}),
\end{align*}
forcing $\delta(\partial_0,\partial_{i-1})=\delta(\partial_i,\partial_{-1})$ for any $i> -1$. Inserting \eqref{vb} into this equality, we obtain
$$\sum_{j\geq -1}a_j(i-1-j)\partial_{i+j-1}=\sum_{l\geq -1}b_l(i-l)\partial_{l+i},\ \ \ \ \forall i> -1.$$
By observing the coefficients of $\partial_{i-2}$ and $\partial_{2i}$ of both sides, we respectively get
$a_{-1}=0$ and $a_{i+1}=0$ for any $i> -1$. That is,  $\delta(\partial_0,\cdot)=a_0\mathrm{ad}\partial_0$, and then $\delta(\partial_0,\partial_0)=0$. Further, by  Lemmas \ref{prop3}, we have
\begin{align*}
\delta\big(({\rm exp\,}(\lambda{\rm ad\,}\partial_{-1})) (\partial_0),({\rm exp\,}(\lambda{\rm ad\,}\partial_{-1} )) (\partial_0)\big)=\delta(\partial_0+\lambda\partial_{-1},\partial_0+\lambda\partial_{-1})=0,\ \ \ \ \forall \lambda\in\mathbb{F},
\end{align*}
which means $\delta(\partial_0,\partial_{-1})=0$, i.e., $a_0[\partial_0,\partial_{-1}]=-a_0\partial_{-1}=0$. Hence $a_0=0$ and $\partial_0\in {\rm Rad\,}(\mathcal{W}^+_1)$. Finally, for any $\lambda\in\mathbb{F}$ and $ i\geq -1$, we have
\begin{align*}
&\delta\big(({\rm exp\,}(\lambda{\rm ad\,}\partial_{-1})) (\partial_0),({\rm exp\,}(\lambda{\rm ad\,}\partial_{-1})) (\partial_i)\big)\\
&=\delta(\partial_0+\lambda\partial_{-1},\partial_i+\lambda(i+1)\partial_{i-1}+\cdots+\lambda^{i+1}\frac{(i+1)\ldots 1}{(i+1)!}\partial_{-1})\\
&=\delta(\lambda\partial_{-1},\partial_i+\lambda(i+1)\partial_{i-1}+\cdots+\lambda^{i+1}\frac{(i+1)\ldots 1}{(i+1)!}\partial_{-1})=0.
\end{align*}
Taking $\lambda=1,\ldots,i+2$, we can obtain a linear equation system whose coefficient matrix is the Vandermonde matrix. So $\delta(\partial_{-1}, \partial_{i})=0$ and then $d_1\in \mathrm{Rad}(\mathcal{W}^+_1)$. We complete the proof.
\end{proof}
\begin{lemm}\label{lemma2} Let $x\in \mathcal{W}^+_n$.
If $[x, y]\in \mathrm{Rad}(\mathcal{W}^+_n)$ for all $y\in \mathcal{D}$, then $x\in \mathrm{Rad}(\mathcal{W}^+_n)$.
\end{lemm}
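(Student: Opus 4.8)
The plan is to prove the statement directly from the two defining identities of a biderivation, using only the hypothesis $[x,d_i]\in\mathrm{Rad}(\mathcal{W}^+_n)$ for $i=1,\dots,n$ and the fact, supplied by Lemma \ref{lemma1}, that $d_i\in\mathcal{D}\subseteq\mathrm{Rad}(\mathcal{W}^+_n)$. Concretely, I would fix an arbitrary symmetric biderivation $\delta$ and aim to show $\delta(x,w)=0$ for every $w\in\mathcal{W}^+_n$; since $\delta$ is arbitrary, this is exactly the assertion $x\in\mathrm{Rad}(\mathcal{W}^+_n)$.

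First I would extract a commuting relation from the first identity in Definition \ref{defi2.1}, applied with inner bracket $[x,d_i]$:
\begin{equation*}
\delta([x,d_i],w)=[x,\delta(d_i,w)]-[d_i,\delta(x,w)].
\end{equation*}
Here $\delta(d_i,w)=0$ because $d_i\in\mathrm{Rad}(\mathcal{W}^+_n)$, while $\delta([x,d_i],w)=0$ because $[x,d_i]\in\mathrm{Rad}(\mathcal{W}^+_n)$ by hypothesis. Hence $[d_i,\delta(x,w)]=0$ for every $i$ and every $w$. Separately, symmetry of $\delta$ together with $d_i\in\mathrm{Rad}(\mathcal{W}^+_n)$ gives $\delta(x,d_i)=\delta(d_i,x)=0$.

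Next I would feed both facts into the second biderivation identity. For any $i$ and any $w$,
\begin{equation*}
\delta(x,[d_i,w])=[\delta(x,d_i),w]+[d_i,\delta(x,w)]=0,
\end{equation*}
the two summands vanishing by the two relations just obtained. Thus the linear map $\delta(x,\cdot)$ annihilates the subspace $[\mathcal{D},\mathcal{W}^+_n]$. To finish I would show $[\mathcal{D},\mathcal{W}^+_n]=\mathcal{W}^+_n$: for any $h\in A_n$ and any index $j$, because $\mathrm{char}\,\mathbb{F}=0$ one can solve $d_1(f)=h$ by integrating $h$ in $t_1$, and then $[d_1,fd_j]=d_1(f)d_j=hd_j$; since the elements $hd_j$ span $\mathcal{W}^+_n=\oplus_j A_nd_j$, the spanning claim holds. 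Therefore $\delta(x,\cdot)=0$, and as $\delta$ was arbitrary we conclude $x\in\mathrm{Rad}(\mathcal{W}^+_n)$.

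The chain is essentially forced once the relation $[d_i,\delta(x,w)]=0$ is in hand, so I do not anticipate a deep obstacle; the one point requiring care is the surjectivity $[\mathcal{D},\mathcal{W}^+_n]=\mathcal{W}^+_n$, which genuinely uses $\mathrm{char}\,\mathbb{F}=0$ through the integration of polynomials and would fail in positive characteristic. An alternative closing, if one prefers to avoid that computation, is to observe that $[d_i,\delta(x,w)]=0$ for all $i$ forces $\delta(x,w)$ into the centralizer of $\mathcal{D}$, which in characteristic $0$ is $\mathcal{D}$ itself, and then to eliminate a nonzero value by writing the inner derivation $\delta(x,\cdot)=\mathrm{ad}(u)$ and testing on an element such as $t_p^2 d_p$; the direct route above is shorter and does not invoke inner-ness of derivations.
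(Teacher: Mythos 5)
Your proof is correct, and it diverges from the paper's in a meaningful way after the shared first step. Both arguments begin identically: applying the first biderivation identity to $[x,d_i]$ and using $\mathcal{D}\subseteq\mathrm{Rad}(\mathcal{W}^+_n)$ to get $[d_i,\delta(x,w)]=0$ for all $i$ and $w$. From there the paper concludes that $\delta(x,w)$ lies in the centralizer of $\mathcal{D}$, namely $\mathcal{D}$ itself, and then closes by invoking the theorem of Dokovic--Zhao that every derivation of $\mathcal{W}^+_n$ is inner: writing $\delta(x,\cdot)=\mathrm{ad}\,X$, the condition $[X,z]\in\mathcal{D}$ for all $z$ forces $X=0$. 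You instead stay entirely inside the biderivation axioms: symmetry gives $\delta(x,d_i)=0$, the second identity then gives $\delta(x,[d_i,w])=0$, and the surjectivity of $d_1$ on $A_n$ (integration, valid since $\mathrm{char}\,\mathbb{F}=0$) shows $[\mathcal{D},\mathcal{W}^+_n]$ spans $\mathcal{W}^+_n$, so $\delta(x,\cdot)=0$. Your route is more self-contained -- it needs no classification of derivations of $\mathcal{W}^+_n$, and it makes explicit exactly where characteristic $0$ enters -- while the paper's is shorter in context, since the inner-derivation theorem is already cited and used in the proof of Lemma \ref{lemma1}, and its centralizer formulation is what the subsequent induction in Theorem \ref{theomp} is modeled on. (Your remark about the alternative closing via the centralizer and $\delta(x,\cdot)=\mathrm{ad}(u)$ is, in fact, essentially the paper's own proof.)
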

\begin{proof}
 For any $y\in \mathcal{D}, z\in \mathcal{W}_n^+$, we have
$$0=\delta([x,y],z)=[\delta(x,z),y]+[x,\delta(y,z)]=[\delta(x,z),y],$$
where the last equation holds by Lemma \ref{lemma1}. This implies that $\delta(x,z)\in \mathcal{D}$.
Assume that $\delta(x,\cdot)=\mathrm{ad}X$ for some $X\in \mathcal{W}^+_n$.
Then $[X,z]\in \mathcal{D}$ for any $z\in\mathcal{W}^+_n$. This yields that $X=0$.
Consequently, $\delta(x,\cdot)=0$, as desired.
\end{proof}
Now we are in a position to present our main result in this section.
\begin{theo}\label{theomp}
Every symmetric biderivation of $\mathcal{W}^+_n$ over any field of characteristic $0$ is trivial.
\end{theo}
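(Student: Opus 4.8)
The plan is to prove that $\mathrm{Rad}(\mathcal{W}^+_n)=\mathcal{W}^+_n$. Indeed, $\mathrm{Rad}(\mathcal{W}^+_n)=\mathcal{W}^+_n$ says that for every symmetric biderivation $\delta$ and every $x\in\mathcal{W}^+_n$ one has $\delta(x,\mathcal{W}^+_n)=0$, i.e. $\delta$ vanishes identically, which is exactly the triviality asserted in the theorem. The two lemmas just proved are tailored for this reduction: Lemma~\ref{lemma1} furnishes the bottom layer $\mathcal{D}\subseteq\mathrm{Rad}(\mathcal{W}^+_n)$, while Lemma~\ref{lemma2} lets us promote an element into the radical once all of its brackets against $\mathcal{D}$ already lie there. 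What remains is to organize these two facts into an induction.

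The natural bookkeeping device is the standard $\Z$-grading $\mathcal{W}^+_n=\bigoplus_{k\geq -1}L_k$, where $L_k=\mathrm{span}_{\F}\{t^{\alpha}d_i\mid 1\leq i\leq n,\ |\alpha|=k+1\}$, so that $L_{-1}=\mathcal{D}$. The key structural observation is that bracketing against $\mathcal{D}$ strictly lowers degree: for any $y=d_j\in\mathcal{D}$ one computes $[t^{\alpha}d_i,d_j]=-\alpha_j\,t^{\alpha-e_j}d_i$, which lies in $L_{k-1}$ whenever $t^{\alpha}d_i\in L_k$. Thus $[L_k,\mathcal{D}]\subseteq L_{k-1}$ for every $k\geq -1$. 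Since $\mathrm{Rad}(\mathcal{W}^+_n)$ is a subspace (being the intersection of the kernels of all symmetric biderivations), it suffices to show $L_k\subseteq\mathrm{Rad}(\mathcal{W}^+_n)$ for each $k$, and I would prove this by induction on $k$.

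For the base case $k=-1$ we have $L_{-1}=\mathcal{D}\subseteq\mathrm{Rad}(\mathcal{W}^+_n)$ by Lemma~\ref{lemma1}. For the inductive step, assume $L_{k-1}\subseteq\mathrm{Rad}(\mathcal{W}^+_n)$ and take a homogeneous $x\in L_k$. By the degree-lowering observation, $[x,y]\in L_{k-1}\subseteq\mathrm{Rad}(\mathcal{W}^+_n)$ for every $y\in\mathcal{D}$, so Lemma~\ref{lemma2} yields $x\in\mathrm{Rad}(\mathcal{W}^+_n)$. As $L_k$ is spanned by such homogeneous elements and $\mathrm{Rad}(\mathcal{W}^+_n)$ is a subspace, $L_k\subseteq\mathrm{Rad}(\mathcal{W}^+_n)$. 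Summing over all $k\geq -1$ gives $\mathrm{Rad}(\mathcal{W}^+_n)=\mathcal{W}^+_n$, as required.

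The substance of the argument has really been front-loaded into Lemma~\ref{lemma1}, whose proof required the explicit exponential automorphisms together with the Vandermonde trick; granting that lemma along with Lemma~\ref{lemma2}, the theorem itself is only a one-line-per-layer induction, and I do not anticipate a genuine obstacle. The only points demanding mild care are purely formal: one must confirm that $\mathrm{Rad}(\mathcal{W}^+_n)$ is a vector subspace (so that proving containment on a spanning set of homogeneous vectors suffices) and that the grading satisfies $[L_k,\mathcal{D}]\subseteq L_{k-1}$, both of which are immediate from the definitions and the bracket formula.
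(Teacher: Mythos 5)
Your proof is correct and is essentially the same as the paper's: the paper also reduces the theorem to showing $t^{\alpha}d_i\in\mathrm{Rad}(\mathcal{W}^+_n)$ and inducts on $|\alpha|$, using Lemma~\ref{lemma1} for the base case and Lemma~\ref{lemma2} for the inductive step. Your repackaging via the grading $L_k$ (and the explicit check that $[L_k,\mathcal{D}]\subseteq L_{k-1}$) is only a cosmetic difference.
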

\begin{proof}
It is enough to prove that $t^{\alpha}d_i\in \mathrm{Rad}(\mathcal{W}^+_n)$ for any $\alpha\in\Z_+^n$ and $1\leq i\leq n$. For any fixed $i$, we  proceed by induction on $|\alpha|$.
The case $|\alpha|=0$ is given by Lemma \ref{lemma1}. Assume that $t^{\beta}d_i\in \mathrm{Rad}(\mathcal{W}^+_n)$ for any $|\beta|<|\alpha|$.
According to the assumption,  $[d_k,t^{\alpha}d_i]=t^{(\alpha_1,\ldots,\alpha_k-1,\ldots, \alpha_n)}d_i\in \mathrm{Rad}(\mathcal{W}^+_n)$ for any $1\leq k\leq n$. Then by Lemma \ref{lemma2}, we have $t^{\alpha}d_i\in \mathrm{Rad}(\mathcal{W}^+_n)$ for any $\alpha$, completing the proof.
\end{proof}

Combining this with the results in \cite{B5} we obtain the following consequence.

\begin{coro}
	Every  biderivation $\delta$ of $\mathcal{W}^+_n$ over any field $\F$ of characteristic $0$ is  of the form  $\delta(x,y) = \lambda [x,y]$, $x,y\in \mathcal{W}^+_n$, for some
	$\lambda\in  \F $.
\end{coro}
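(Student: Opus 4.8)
The plan is to split an arbitrary biderivation into its symmetric and skew-symmetric parts and dispose of each using a result already available: Theorem~\ref{theomp} for the symmetric part and the theorem of \cite{B5} for the skew-symmetric part.

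First I would symmetrize. Given a biderivation $\delta$ of $\mathcal{W}^+_n$, set
$$\delta_s(x,y)=\tfrac12\bigl(\delta(x,y)+\delta(y,x)\bigr),\qquad \delta_a(x,y)=\tfrac12\bigl(\delta(x,y)-\delta(y,x)\bigr),$$
so that $\delta=\delta_s+\delta_a$ with $\delta_s$ symmetric and $\delta_a$ skew-symmetric. The opposite map $(x,y)\mapsto\delta(y,x)$ is again a biderivation: by Remark~(1) after Definition~\ref{defi2.1} the biderivation property is exactly that both partial maps $\delta(x,\cdot)$ and $\delta(\cdot,x)$ are derivations for each $x$, and this condition is symmetric in the two slots. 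Since the biderivations form a vector space (Remark~(2)), both $\delta_s$ and $\delta_a$ are biderivations. Theorem~\ref{theomp} then gives $\delta_s=0$, so $\delta=\delta_a$ is skew-symmetric.

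To finish I would invoke \cite{B5}, which states that every skew-symmetric biderivation of a perfect, centerless Lie algebra is inner, that is, of the form $(x,y)\mapsto\lambda[x,y]$ for a fixed $\lambda\in\F$. It remains to verify that $\mathcal{W}^+_n$ is perfect and centerless over an arbitrary field of characteristic $0$. Centerlessness is routine: a central element commutes with every $d_j$, which forces all its coefficients to be constants, hence it lies in $\mathcal{D}$; commuting in addition with every $t_jd_j$ then forces it to be $0$. Perfectness is equally direct, since each basis vector is a bracket: for instance $[d_i,t_i^{m+1}d_i]=(m+1)t_i^md_i$ realizes $t_i^md_i$, while $[t_kd_k,t^\alpha d_i]=\alpha_kt^\alpha d_i$ (for any $k\neq i$ with $\alpha_k\neq0$) realizes the remaining $t^\alpha d_i$. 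Thus \cite{B5} applies to $\delta_a$, giving $\delta(x,y)=\delta_a(x,y)=\lambda[x,y]$.

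The only delicate point is the appeal to \cite{B5}: one must confirm that its hypotheses genuinely hold for $\mathcal{W}^+_n$ over every field of characteristic $0$, and that its conclusion yields precisely a scalar multiple of $[\cdot,\cdot]$ rather than a more general inner biderivation. Since the symmetric part vanishes by Theorem~\ref{theomp}, there is no hard computation left; the symmetrization step, the check that both summands are biderivations, and the bracket identities above are all routine.
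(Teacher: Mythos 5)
Your proposal is correct and is essentially the paper's own argument: the paper proves this corollary by simply combining Theorem \ref{theomp} with the Bre\v{s}ar--Zhao theorem of \cite{B5}, i.e.\ exactly the symmetric/skew-symmetric decomposition you carry out, and you in fact supply more detail than the paper does (the verification that $\mathcal{W}^+_n$ is perfect and centerless). The one caveat you flag is real but is equally left implicit by the paper: \cite{B5} yields $\delta_a(x,y)=\gamma([x,y])$ with $\gamma$ in the centroid, so to obtain a scalar $\lambda\in\F$ one must also note that the centroid of $\mathcal{W}^+_n$ is $\F\,\mathrm{id}$ (a short check, e.g.\ using that every element of $\mathcal{W}^+_n$ lies in the image of ${\rm ad\,}d_i$ and that the weight space of weight $-\epsilon_i$ is spanned by $d_i$).
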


We remark that the  biderivations of the Witt algebras $\mathcal{W}_n$ over Laurent polynomials are proved to be inner in \cite{TY}.

\section{Applications}
Post-Lie algebra structure is an important generalization of
left-symmetric algebra structure, which arised in many areas of algebra and geometry \cite{Bu}. Post-Lie algebras, which are related to homology of partition posets and the study of Koszul operads, have been studied by Vallette \cite{V} and Loday \cite{Lo}. In
addition, post-Lie algebras have been studied in connection with isospectral flows, Yang-Baxter equations, Lie-Butcher Series and Moving Frames \cite{ELMM}. The existence of post-Lie algebra structure on a given
pair of Lie algebras turned out to be very meaningful and quite challenging. The authors in \cite{BD} introduced a special class of post-Lie algebra structures, namely commutative post-Lie algebra.  Using the Levi decompositions,  it was proved that any commutative post-Lie algebra structure on a complex (finite-dimensional)  perfect Lie algebra is trivial \cite{BM}. As an application of the previous results, we shall give commutative post-Lie algebra structures on finite-dimensional   simple Lie algebras over arbitrary fields   of characteristic not $2$ or $3$, and the Witt algebras $\mathcal{W}^+_n$ over  fields of characteristic $0$. Let us recall the following definition of a commutative post-Lie algebra.
\begin{defi}\label{defi2.2}
\rm
A {\em commutative post-Lie algebra structure} on a Lie algebra $L$ over a field $\F$ is an $\F$-bilinear product $x\cdot y$  on $L$  satisfying the following identities:
\begin{align*}
x\cdot y&=y\cdot x,\\
[x,y]\cdot z&=x\cdot (y\cdot z)-y\cdot (x\cdot z),\\
x\cdot [y,z]&=[x\cdot y,z]+[y,x\cdot z],\ \ \ \ \forall x,y,z\in L.
\end{align*}
We also say that $(L,[,],\cdot)$ is a commutative post-Lie algebra.
\end{defi}
There is always the trivial commutative post-Lie algebra structure on $L$, given by $x\cdot y=0$ for all $x,y\in L$. However, in general, it is not obvious whether or not a given Lie algebra admits a non-trivial commutative post-Lie algebra structure. The following lemma
shows the connection between commutative post-Lie algebra structure and symmetric biderivation of a Lie algebra.
\begin{lemm}\label{lemmap}(cf. \cite{T2})
Let $(L,[,],\cdot)$ be a commutative post-Lie algebra. If we define a
bilinear map $\delta:L\times L\to L$ by $\delta(x,y)=x\cdot y$ for all $x,y\in L$, then $\delta$ is a symmetric biderivation of $L$.
\end{lemm}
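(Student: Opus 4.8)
The plan is to verify directly the three defining conditions of a symmetric biderivation from Definition \ref{defi2.1}, matching each against the corresponding axiom of the commutative post-Lie algebra structure in Definition \ref{defi2.2}. Writing $\delta(x,y)=x\cdot y$, the entire argument amounts to a line-by-line translation between the two sets of identities, with no computation beyond rewriting.

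First I would establish symmetry. Since $\delta(x,y)=x\cdot y=y\cdot x=\delta(y,x)$ for all $x,y\in L$ by the first (commutativity) axiom, $\delta$ is symmetric. Next I would check the first biderivation identity: for all $x,y,z\in L$,
$$\delta([x,y],z)=[x,y]\cdot z=x\cdot(y\cdot z)-y\cdot(x\cdot z)=[x,\delta(y,z)]-[y,\delta(x,z)],$$
which is precisely the second post-Lie axiom. Likewise the second biderivation identity
$$\delta(x,[y,z])=x\cdot[y,z]=[x\cdot y,z]+[y,x\cdot z]=[\delta(x,y),z]+[y,\delta(x,z)]$$
is exactly the third post-Lie axiom. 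Having confirmed all three conditions, one concludes that $\delta$ is a symmetric biderivation.

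I do not expect any genuine obstacle: the substance of the lemma is the observation that the three axioms defining a commutative post-Lie algebra coincide term-for-term with the symmetry requirement and the two derivation identities defining a symmetric biderivation. If anything, the only point worth remarking is an economy: once symmetry is in hand, the two biderivation identities become interchangeable, since swapping the arguments in one of them and invoking $\delta(x,y)=\delta(y,x)$ together with the antisymmetry of $[\cdot,\cdot]$ recovers the other. Thus in principle only one of the second and third post-Lie axioms is strictly needed, but verifying both keeps the correspondence most transparent.
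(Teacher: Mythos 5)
Your verification of symmetry and of the second biderivation identity is correct, but the step where you verify the \emph{first} biderivation identity contains a genuine error. You write
$$\delta([x,y],z)=[x,y]\cdot z=x\cdot(y\cdot z)-y\cdot(x\cdot z)=[x,\delta(y,z)]-[y,\delta(x,z)],$$
claiming this ``is precisely the second post-Lie axiom.'' It is not: the second post-Lie axiom produces $x\cdot(y\cdot z)-y\cdot(x\cdot z)$, where the \emph{outer} operation is again the commutative product, whereas the biderivation identity requires $[x,\delta(y,z)]-[y,\delta(x,z)]=[x,\,y\cdot z]-[y,\,x\cdot z]$, where the outer operation is the Lie bracket. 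The final equality in your chain silently identifies terms of the form $x\cdot(y\cdot z)$ with terms of the form $[x,\,y\cdot z]$, which is unjustified; in a commutative post-Lie algebra the two \emph{differences} do coincide, but only because each equals $[x,y]\cdot z$ --- i.e., as a consequence of the identity you are trying to prove, not as a restatement of an axiom.

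The correct route is exactly the one you relegate to your closing remark: derive the first biderivation identity from symmetry together with the third post-Lie axiom (which, as you correctly note, is literally the second biderivation identity). Explicitly,
$$\delta([x,y],z)=z\cdot[x,y]=[z\cdot x,y]+[x,z\cdot y]=[x,\,y\cdot z]-[y,\,x\cdot z]=[x,\delta(y,z)]-[y,\delta(x,z)],$$
using commutativity of $\cdot$ and antisymmetry of the bracket. Note that the second post-Lie axiom is not needed at all. So your remark contains the entire correct argument, while the displayed ``verification'' it was meant to supplement is wrong as written; promote the remark to the proof and delete the faulty chain. (For what it is worth, the paper states this lemma without proof, citing \cite{T2}; the symmetry trick you describe in the remark is the same one the paper uses in the proof of Lemma \ref{prop3}.)
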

As a consequence of Theorems \ref{prop7}, \ref{theomp} and Lemma \ref{lemmap}, we now give the main result of this section as follows.
\begin{theo}\label{theomp1}
Let $L$ be the  finite-dimensional  classical simple Lie algebras over arbitrary fields   of characteristic not $2$ or $3$, or the Witt algebras $\mathcal{W}^+_n$ over  fields of characteristic $0$. Then every commutative post-Lie algebra structure on $L$ is trivial.
\end{theo}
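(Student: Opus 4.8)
The plan is to read off the theorem as an immediate consequence of the two classification results proved in Sections 3 and 4, fed through the dictionary between commutative post-Lie structures and symmetric biderivations supplied by Lemma \ref{lemmap}. First I would fix a commutative post-Lie algebra structure $(L,[,],\cdot)$ on $L$ and define the bilinear map $\delta: L\times L\to L$ by $\delta(x,y)=x\cdot y$. By Lemma \ref{lemmap}, this $\delta$ is a symmetric biderivation of $L$: the commutativity $x\cdot y=y\cdot x$ supplies the symmetry, and the two post-Lie identities in Definition \ref{defi2.2} translate verbatim into the two biderivation identities of Definition \ref{defi2.1}.

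Next I would split into the two families of Lie algebras named in the statement and invoke the matching theorem. If $L$ is a finite-dimensional classical simple Lie algebra over a field of characteristic not $2$ or $3$, then Theorem \ref{prop7} says every symmetric biderivation of $L$ is trivial, so $\delta=0$. If instead $L=\mathcal{W}^+_n$ over a field of characteristic $0$, then the same conclusion $\delta=0$ follows from Theorem \ref{theomp}. In either case $\delta$ is identically zero, which unwinds to $x\cdot y=\delta(x,y)=0$ for all $x,y\in L$; that is, the post-Lie product is the zero product and the structure is trivial. This completes the argument.

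The one point worth a word of care is the meaning of \emph{trivial} in Theorems \ref{prop7} and \ref{theomp}: there it denotes the zero biderivation (indeed their proofs conclude that ${\rm Rad\,}(L)=L$, i.e.\ $\delta=0$), and this is exactly the notion one wants here, since a symmetric biderivation that were merely inner, say $\delta(x,y)=\lambda[x,y]$, would have to vanish anyway—symmetry forces $\lambda[x,y]=\lambda[y,x]=-\lambda[x,y]$, hence $2\lambda[x,y]=0$, so $\lambda=0$ on a non-abelian $L$ in characteristic not $2$. Accordingly there is no genuine obstacle in this final step: all the substantive work has already been carried out in Sections 3 and 4, and the only thing to verify is that the hypotheses of the invoked theorems match those of the statement (characteristic not $2,3$ in the classical simple case, characteristic $0$ in the Witt case), which they do by construction.
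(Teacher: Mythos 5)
Your proposal is correct and is essentially identical to the paper's argument: the paper states Theorem \ref{theomp1} precisely as a consequence of Theorems \ref{prop7}, \ref{theomp} and Lemma \ref{lemmap}, exactly the combination you use. Your closing remark clarifying that \emph{trivial} means the zero biderivation (not merely inner) is a sensible extra check, but it does not alter the route.
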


\subsection*{Acknowledgements}
The authors would like to thank professors Shujuan Wang and Hengyun Yang for their helpful discussions, and are very grateful to the referee for the helpful suggestions and comments.

\end{document}